\newtheorem{theorem}{Theorem}[section]
\newtheorem{lemma}[theorem]{Lemma}
\newtheorem{proposition}[theorem]{Proposition}
\newtheorem{corollary}[theorem]{Corollary}
\theoremstyle{definition}
\theoremstyle{remark}
\numberwithin{equation}{section}
\newcommand{\R}{\mathbb{R}}
\newcommand{\Z}{\mathbb{Z}}
\newcommand{\conv}{\mathrm{conv}}
\newcommand{\half}{\nicefrac{1}{2}}
\newcommand{\mb}{\mathbb}
\begin{document}
\title{Barycenters of points in polytope skeleta}

\author{Michael Gene Dobbins}
\address[MGD]{Dept. Math., Binghamton University, Binghamton, NY 13902, USA}
\email{mdobbins@binghamton.edu}

\author{Florian Frick}
\address[FF]{Dept.\ Math.\ Sciences, Carnegie Mellon University, Pittsburgh, PA 15213, USA}
\email{frick@cmu.edu}

\subjclass[2010]{Primary 51M04, 51M20, 52B11}

\date{\today}

\begin{abstract}
The first author showed that for a given point $p$ in an $nk$-polytope $P$ there are $n$ points in the $k$-faces of~$P$, whose barycenter is~$p$. We show that we can increase the dimension of $P$ by~$r$, if we allow $r$
of the points to be in $(k+1)$-faces. While we can force points with a prescribed barycenter into faces of dimensions $k$ and $k+1$, we show that the gap in dimensions of these faces can never exceed one. We also investigate the weighted analogue of this question, where a convex combination with predetermined coefficients of $n$ points in $k$-faces of an $nk$-polytope is supposed to equal a given target point. While weights that are not all equal may be prescribed for certain values of $n$ and $k$, any coefficient vector that yields a point different from the barycenter cannot be prescribed for fixed $n$ and sufficiently large~$k$.
\end{abstract}

\maketitle

\section{Introduction}

Given an abelian group~$G$ and an integer~${n \ge 2}$, zero-sum problems aim to find sufficient conditions on sequences $x_1, \dots, x_n$ of $n$ elements of $G$ to sum to zero, $x_1+\dots+x_n =0$. The seminal result for this problem area is a theorem of Erd\H os, Ginzburg, and Ziv~\cite{erdos}: any multiset $A \subset \Z/n$ of size $2n-1$ contains $n$ elements (counted with multiplicity) $x_1, \dots, x_n$ such that $x_1+\dots + x_n = 0$. Various generalizations have been established, such as Reiher's proof of the Kemnitz conjecture~\cite{reiher}.

While for finite groups $G$ these problems fall into the realm of combinatorial number theory, they become geometric if $G$ itself has geometry. Here we will study zero-sum problems in Euclidean space, $G = \R^d$. If a set $A \subset \R^d$
is distributed around the origin in a suitable sense, then it contains $n$ vectors that sum to zero---or equivalently, these $n$ vectors have barycenter zero. This was made precise first in~\cite{barba} and then by the first author~\cite{dobbins}: In~\cite{barba} it was shown that if $A$ in $\R^3$ is the $1$-skeleton of a $3$-polytope that contains the origin, then there are $x_1, x_2, x_3 \in A$ with $x_1+x_2+x_3 = 0$. They conjectured that the $1$-skeleton of a polytope in $\R^n$ that contains the origin contains $n$ vectors that sum to zero. More generally, the main result of~\cite{dobbins} establishes that the $k$-skeleton of an $nk$-polytope $P$ with $0 \in P$ contains $n$ vectors that sum to zero. A simplified proof was given in~\cite{blagojevic}.

The proofs in~\cite{dobbins, blagojevic} depend on methods from equivariant topology, and thus crucially make use of the inherent symmetries of the problem. In particular, for a polytope of dimension $d < nk$ with $d > n(k-1)$, the proofs do not generalize to force $nk-d$ of the $x_i$ into faces of dimension~${k-1}$. Our first main result will extend slightly beyond the symmetric case and establish precisely that; see Theorem~\ref{thm:inh-skel-rephr}:

\begin{theorem}
\label{thm:inh-skel}
	Let $P$ be a $d$-polytope with $0 \in P$. Let $n \ge 2$ and $k \ge 0$ be integers such that $d = nk+r$ for some $r \in \{0, \dots, n-1\}$. Then there are points $x_1, \dots, x_{n-r}$ in $k$-faces of~$P$, and $x_{n-r+1}, \dots, x_n$ in $(k+1)$-faces of~$P$ such that $x_1 + \dots + x_n = 0$.
\end{theorem}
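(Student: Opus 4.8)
The plan is to reduce to the case $r=0$, i.e.\ to the theorem of the first author that every point of an $nm$-polytope is the barycenter of $n$ points in its $m$-faces, applied with $m=k+1$ to the auxiliary polytope $Q = P \times \Delta^{n-r}$, where $\Delta^{n-r}$ denotes an $(n-r)$-simplex. Note $\dim Q = (nk+r) + (n-r) = n(k+1)$. Since every face of $Q$ is a product $G\times H$ of a face $G$ of $P$ with a face $H$ of $\Delta^{n-r}$, the carrier (minimal) face of a point $(a,b)\in Q$ is the product of the carrier face of $a$ in $P$ with that of $b$ in $\Delta^{n-r}$; hence $(a,b)$ lies in the $(k+1)$-skeleton of $Q$ precisely when $\dim(\operatorname{carr}_P a) + \dim(\operatorname{carr}_{\Delta^{n-r}} b) \le k+1$. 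In that case $a$ always lies in a $(k+1)$-face of $P$, and it lies even in a $k$-face of $P$ whenever $b$ is not a vertex of $\Delta^{n-r}$, so that $\dim(\operatorname{carr} b)\ge 1$.

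The step requiring care is the choice of target point in the simplex factor. I would take $t^\ast\in\operatorname{int}(\Delta^{n-r})$ with barycentric coordinates $\tau=(\tau_0,\dots,\tau_{n-r})$ (so $\tau_j>0$ and $\sum_j\tau_j=1$) chosen so that $\sum_{j}\lfloor n\tau_j\rfloor = r$. Such $t^\ast$ exists: from $\lfloor x\rfloor > x-1$ one gets $\sum_j\lfloor n\tau_j\rfloor > \bigl(\sum_j n\tau_j\bigr) - (n-r+1) = r-1$, so this integer is always at least $r$, and the value $r$ is realized, e.g.\ by $n\tau_0 = r+\tfrac12$ and $n\tau_j = 1 - \tfrac{1}{2(n-r)}$ for $1\le j\le n-r$. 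Now apply the $r=0$ theorem to $Q$ and the point $(0,t^\ast)\in Q$: there are $n$ points $y_i = (a_i,b_i)$ in $(k+1)$-faces of $Q$ whose barycenter is $(0,t^\ast)$, so $\sum_i a_i = 0$ in $\R^d$ and $\sum_i b_i = n\,t^\ast$ in $\Delta^{n-r}$. If at least $r+1$ of the $b_i$ were vertices of $\Delta^{n-r}$, then letting $c_j$ count those among them equal to the $j$th vertex, the $c_j$ are nonnegative integers with $\sum_j c_j = r+1$ and $c_j \le \sum_i (b_i)_j = n\tau_j$ (comparing barycentric coordinates, all coordinate vectors having total $n$), whence $\sum_j\lfloor n\tau_j\rfloor \ge r+1$, contradicting the choice of $t^\ast$. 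So at most $r$ of the $b_i$ are vertices of $\Delta^{n-r}$.

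By the first paragraph, at least $n-r$ of the points $a_i$ then lie in $k$-faces of $P$, and all of them lie in $(k+1)$-faces of $P$ (for $r\ge 1$ one has $\dim P = nk+r\ge k+1$, so a point of the $(k+1)$-skeleton lies in an honest $(k+1)$-face; for $r=0$ there is nothing to place there and the statement is the first author's theorem). Relabeling so that $x_1,\dots,x_{n-r}$ are $n-r$ of the $a_i$ lying in $k$-faces and $x_{n-r+1},\dots,x_n$ are the rest, we obtain $\sum_i x_i = \sum_i a_i = 0$, as required. I expect the main obstacle to be precisely the choice of $t^\ast$: recognizing that the right auxiliary factor is a simplex and that its target point must have $\sum_j\lfloor n\tau_j\rfloor$ equal to the minimum possible value $r$ is what pins the threshold in the conclusion to $r$; it is also worth noting that the argument needs only $(0,t^\ast)\in Q$, not $0\in\operatorname{int} P$, so the case $0\in\partial P$ needs no separate treatment.
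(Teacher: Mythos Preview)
Your proof is correct and follows essentially the same approach as the paper: form $Q = P \times \Delta^{n-r}$, apply Dobbins' theorem with a carefully chosen target in the simplex factor, and argue that at most $r$ of the simplex components can be vertices. Your explicit choice $n\tau_0 = r+\tfrac12$, $n\tau_j = 1-\tfrac{1}{2(n-r)}$ is (up to relabeling coordinates) exactly the paper's target point; your floor-sum condition $\sum_j \lfloor n\tau_j\rfloor = r$ and the counting argument via $c_j \le \lfloor n\tau_j\rfloor$ give a slightly cleaner and more general packaging of what the paper proves by a direct two-case analysis in its Lemma~2.4.
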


We show that this is tight in the sense that dimensions cannot be further decreased (Proposition~\ref{prop}(\ref{dim})), and that the dimensions of faces that the $x_i$ are constrained to cannot differ by more than one (Proposition~\ref{prop}(\ref{F-k})).

Since $\R^d$ is a vector space this gives us another chance to break symmetries and study unbalanced zero-sum problems: Given coefficients $\lambda_1, \dots, \lambda_n > 0$, find sufficient conditions on a set $A \subset \R^d$ to contain $n$ vectors $x_1, \dots, x_n \in A$ with $\lambda_1x_1 + \dots + \lambda_nx_n = 0$. Again, earlier proofs cannot easily be adapted to this asymmetric situation. In fact, we can use Theorem~\ref{thm:inh-skel} to establish results for the case of unbalanced coefficients; see Theorem~\ref{thm:unb-weights}. We also show that if $0 = \lambda_1x_1 + \dots + \lambda_nx_n$ for $x_i$ in the $k$-faces of an $nk$-polytope, then the $\lambda_i$ are almost equal for large~$k$; see Corollary~\ref{cor:balanced}.

\section{Inhomogeneous skeleta}

Let $P^{(k)} \subset \R^{nk}$ denote the $k$-skeleton of a polytope~$P$, that is, the collection of all faces of dimension at most~$k$, and suppose that $P$ contains the origin, then $P^{(k)}$ contains $n$ vectors that sum to zero. This was shown by the first author:

\begin{theorem}[Dobbins~\cite{dobbins}]
\label{thm:skel}
	Let $P \subset \R^{nk}$ be a polytope with $0 \in P$. Then there are $x_1, \dots, x_n \in P^{(k)}$ such that $x_1 + \dots + x_n = 0$.
\end{theorem}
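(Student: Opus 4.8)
The plan is to set this up as a configuration space--test map problem with the symmetric group $S_n$ acting, and to deduce a zero of the test map from a Borsuk--Ulam-type theorem. Reformulating, we seek $n$ points $x_1,\dots,x_n \in P^{(k)}$ together with weights $t_1=\dots=t_n=\tfrac1n$ such that $\sum_i t_i x_i = 0$. I would therefore work with the $n$-fold join $X = (P^{(k)})^{*n}$, a point of which is a formal convex combination $\xi = \sum_{i=1}^n t_i x_i$ with $t \in \Delta^{n-1}$ and $x_i \in P^{(k)}$, equipped with the $S_n$-action permuting the $n$ join coordinates. Since $P^{(k)}$ with its polytopal face structure is a regular CW complex, $X$ is a CW complex carrying a cellular $S_n$-action. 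Define the test map
\[
 f \colon X \longrightarrow \R^{nk}\oplus W_n, \qquad f(\xi) = \Bigl(\textstyle\sum_{i=1}^n t_i x_i,\ \bigl(t_1-\tfrac1n,\dots,t_n-\tfrac1n\bigr)\Bigr),
\]
where $W_n = \{w\in\R^n : \sum_i w_i = 0\}$ is the standard representation; this $f$ is $S_n$-equivariant, with $S_n$ acting trivially on $\R^{nk}$ (the barycenter is symmetric) and by permutation on $W_n$. A point of $f^{-1}(0)$ is precisely an $n$-tuple of points of $P^{(k)}$ summing to $0$, so it suffices to prove that $f$ has a zero.

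For the topology I would use two facts. First, the $k$-skeleton of a polytope of dimension larger than $k$ is $(k-1)$-connected --- indeed homotopy equivalent to a wedge of $k$-spheres, which follows from a shelling of the polytope's boundary (adding faces one at a time). Since the join of $n$ spaces, each $(k-1)$-connected, is $(nk+n-2)$-connected, the space $X$ is $(nk+n-2)$-connected. Second, the sphere $S(\R^{nk}\oplus W_n)$ has dimension exactly $nk+n-2$. So $X$ is a CW complex whose connectivity matches the dimension of the target sphere, and one expects that no $S_n$-equivariant map $X \to S(\R^{nk}\oplus W_n)$ exists, which yields the desired zero of $f$. Two small reductions clean up the fixed-point bookkeeping: we may assume $0 \notin P^{(k)}$, since otherwise $x_1=\dots=x_n=0$ already works; and we may assume $0$ lies in the interior of $P$, by a limiting argument using that $P^{(k)}$ is compact and the zero-sum condition is closed.

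The crux --- and the reason the naive argument does not quite close --- is that the barycenter is a symmetric function of the $x_i$, so $S_n$ acts trivially on the $\R^{nk}$-summand and the target sphere has nonempty fixed-point set $S^{nk-1}$; Dold's theorem and similar results, which require the action on the target to be free (or at least fixed-point free), therefore do not apply directly. I see two ways to finish. One is to replace $X$ by the $2$-wise deleted join $(P^{(k)})^{*n}_\Delta$, forcing the $x_i$ into pairwise distinct faces; for $n=p$ prime this makes the relevant $\Z/p$-action free, and general $n$ follows by a standard reduction, at the price of re-deriving the (still sufficiently high) connectivity of the deleted join and of choosing the test representation so that the action on the test sphere is free as well. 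The other is to keep $X$ and instead compare equivariant indices in the spirit of Fadell and Husseini (or invoke a theorem of Volovikov), where the work is to show that the index of the highly connected space $X$ is too large for an equivariant map into the target to exist. Either route, combined with the connectivity estimate for polytope skeleta and the reductions above, proves the theorem; the delicate part throughout is matching the symmetry of the barycenter problem to a topological invariant that is genuinely obstructed.
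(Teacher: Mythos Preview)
This theorem is not proved in the present paper: it is quoted as a known result of Dobbins (Theorem~\ref{thm:skel}), with a simplified proof referenced to~\cite{blagojevic}, and is then used as a black box in the proof of Theorem~\ref{thm:inh-skel-rephr}. There is thus no proof here to compare your proposal against.

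On the substance of your outline: the configuration-space/test-map setup and the connectivity count are correct and are indeed the backbone of the equivariant-topology proofs in the cited references. You also correctly isolate the real difficulty, namely that $S_n$ acts trivially on the $\R^{nk}$ summand, so the target sphere has a large fixed-point set and Dold-type theorems do not apply. Your two proposed resolutions, however, remain sketches. Route~1 in particular does not close the gap: passing to the deleted join frees the action on the \emph{domain}, but the action on the target sphere is dictated by the problem and still has fixed points; you cannot simply ``choose the test representation'' to be free, since the symmetric sum $\sum_i x_i$ forces the trivial action on $\R^{nk}$. The actual proofs in~\cite{dobbins} and~\cite{blagojevic} do not alter the representation; they instead produce a nontrivial obstruction (a degree computation, respectively an equivariant cohomological/index argument) that survives despite the fixed points. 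Your Route~2 points in that direction, but the index computation is precisely the heart of the matter and is not supplied.
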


Equivalently, for any given point $p \in P$, where $P$ is an $nk$-polytope, there are points $x_1, \dots, x_n \in P^{(k)}$ with their barycenter $\tfrac1n x_1 + \dots \tfrac1n x_n$ at~$p$. In the sequel, we will use arbitrary target points $p$ in in the polytope, and not only the origin.

Let $\mathcal{P}(d;k_1,\dots,k_n)$ be the predicate ``For any polytope $P$ of dimension at most~$d$, and for any target point $p \in P$, there exist points $x_1,\dots,x_n$ such that $x_i$ is in a $k_i$-face of $P$ and the target point $p$ is the barycenter of the points $x_1,\dots, x_n$.'' With this notation Theorem~\ref{thm:inh-skel} can be rephrased as:

\begin{theorem}
\label{thm:inh-skel-rephr}
	For all $n,d \in \mb{N}$ where $d=nk+r$ for $k \in \mb{Z}$, $r \in \{0, \dots, n-1\}$, the statement 
	$$\mathcal{P}(d;\:\underbrace{k,\dots,\,k}_{n-r},\,\underbrace{k{+}1,\dots,\,k{+}1}_{r})$$ is true.
\end{theorem}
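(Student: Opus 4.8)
The plan is to reduce the inhomogeneous statement to the homogeneous Theorem \ref{thm:skel} by a dimension-splitting trick: augment the polytope $P$ in extra coordinate directions so that the $k$-skeleton of the augmented polytope records exactly the data of points in $k$-faces and $(k+1)$-faces of $P$. Concretely, given a $d$-polytope $P \subset \R^d$ with target point $p$, where $d = nk+r$, I would work in $\R^{nk} = \R^d \times \R^{nk-d}$ and build a polytope $Q$ whose faces of dimension $\le k$ project down to the faces of $P$ of dimension $\le k$ together with, for $r$ of the summands, faces of dimension $\le k+1$. The natural construction is a product or a join of $P$ with a small auxiliary polytope (for instance, a simplex or a segment raised into the new coordinates) chosen so that a $k$-face of $Q$ is either contained in a translate of $P^{(k)}\times\{*\}$ or is spanned by a $(k+1)$-face of $P$ together with one extra auxiliary direction. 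Applying Theorem \ref{thm:skel} to $Q$ with a suitably chosen target point (namely $p$ in the $\R^d$ factor, and $0$ in the auxiliary factor) yields $n$ points $y_1,\dots,y_n$ in $Q^{(k)}$ summing to $np$ (after rescaling to barycenters); projecting back to $\R^d$ gives $n-r$ points in $k$-faces of $P$ and $r$ points in $(k+1)$-faces of $P$, and the auxiliary coordinates summing to zero forces the projected points to have barycenter $p$.

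The key steps, in order, are: (1) set up the ambient splitting $\R^{nk} = \R^d \oplus \R^{nk-d}$ and note $nk - d = (n-r)k + r(k+1) - d$... wait, more simply $nk-d = nk-(nk+r) $ is negative, so instead one embeds $P$ into higher dimension: take $Q \subset \R^{N}$ with $N = nk$ obtained by "lifting" $r$ copies worth of extra room; (2) identify precisely which faces of $Q$ have dimension exactly $k$ and check that they come in two types as above — this is the combinatorial heart and requires understanding the face lattice of the chosen product/join/prism construction; (3) invoke Theorem \ref{thm:skel} on $Q$; (4) project and verify the barycenter condition and the face-dimension bookkeeping, in particular that exactly $r$ of the points land in the "$(k+1)$-type" faces (or at most $r$, which suffices since a point in a lower face also lies in a higher face).

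The main obstacle I anticipate is step (2): arranging the auxiliary construction so that the $k$-skeleton of $Q$ gives the \emph{asymmetric} split $(k,\dots,k,k{+}1,\dots,k{+}1)$ rather than something uniform. A plain product $P \times \Delta$ tends to produce faces that are products of faces, which does not cleanly separate $n-r$ summands from $r$ summands; one likely needs a more delicate construction — perhaps taking a pyramid over $P$ in several of the new directions, or a free join with a simplex on $r$ vertices, so that using a new vertex "costs" one dimension and can be afforded by exactly $r$ of the $n$ points. A secondary subtlety is ensuring the target point for $Q$ is genuinely interior (so Theorem \ref{thm:skel} applies): since $0 \in P$, placing the auxiliary target at the barycenter of the auxiliary polytope should guarantee this, but it must be checked. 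Once the face analysis is correct, the remainder is routine linear algebra on the coordinate projection.
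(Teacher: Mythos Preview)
Your meta-strategy --- augment $P$ to a larger polytope $Q$, apply Theorem~\ref{thm:skel}, and project back --- is exactly the paper's approach. But two concrete points are off, and fixing them is essentially the whole proof.

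First, the dimension goes the wrong way. You want $Q$ of dimension $nk$ and points in $Q^{(k)}$, but $P$ already has dimension $d = nk+r > nk$, so $P$ cannot sit inside any $nk$-polytope; your own computation $nk - d < 0$ shows this and is not repaired by writing $N = nk$ a line later. The paper goes \emph{up}: take $Q = P \times \Delta^{n-r}$, which has dimension $n(k{+}1)$, and apply Theorem~\ref{thm:skel} at level $k{+}1$. Each resulting $y_i \in Q^{(k+1)}$ lies in a product face $F_i \times G_i$ with $\dim F_i + \dim G_i \le k{+}1$, so its $P$-component $x_i$ is automatically in a $(k{+}1)$-face of~$P$; moreover $x_i$ lies in a $k$-face of~$P$ precisely when the simplex-component $\tilde y_i$ is not a vertex of~$\Delta^{n-r}$.

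Second --- and this is the idea you are missing --- the asymmetric split into $n-r$ and $r$ points is \emph{not} encoded in the face lattice of $Q$ at all. You correctly note that a product ``does not cleanly separate $n-r$ summands from $r$ summands'' and then abandon it for joins or pyramids; but the product \emph{is} the right construction, and the separation is forced instead by the \emph{choice of target} in the simplex factor. One picks a specific off-center point in $\Delta^{n-r}$ (this is Lemma~\ref{lem-F-r}) with the property that no $n$ points of $\Delta^{n-r}$ with that barycenter can have $r{+}1$ of them at vertices. Hence at most $r$ of the $\tilde y_i$ are vertices, forcing at least $n-r$ of the $x_i$ into $k$-faces of~$P$. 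Your suggested auxiliary target --- the barycenter of the auxiliary polytope --- gives no such control: already for $n=4$, $r=2$ one can place three of the four simplex-components at the vertices of $\Delta^2$ and still hit its barycenter.
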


We postpone the proof of Theorem~\ref{thm:inh-skel-rephr} for now. We remark that we cannot decrease the dimension of any $k$-face to~${k-1}$, even if all other $x_i$ may be chosen from faces of arbitrary dimension, including the interior of~$P$ itself. We also have to show that we cannot decrease the dimension of any $(k+1)$-face to~$k$ (again allowing more freedom for the other~$x_i$). We collect these two results here:

\begin{proposition}
\label{prop}
\begin{compactenum}[(a)]
	\item \label{F-k} For $d \ge nk$ the statement $\mathcal{P}(d;\:k-1,\,\underbrace{d,\dots,\,d}_{n-1})$ is false.
	\item \label{dim} For all $n,d \in \mb{N}$ where $d=nk+r$ for $k \in \mb{Z}$, $r \in \{1, \dots, n-1\}$, the statement $\mathcal{P}(d;\:\underbrace{k,\dots,\,k}_{n-r+1},\,\underbrace{d,\dots,\,d}_{r-1})$ is false. 
\end{compactenum}
\end{proposition}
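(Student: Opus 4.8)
The plan is to construct explicit low-dimensional polytopes and target points witnessing the failure of both statements. For part (\ref{F-k}), I would take $P$ to be a simplex $\Delta^d$ (or any $d$-polytope) and choose the target point $p$ to be a vertex $v$ of $P$. If the barycenter of $x_1,\dots,x_n$ equals the vertex $v$, then since every $x_i$ lies in $P$ and $v$ is an extreme point, we must have $x_1 = \dots = x_n = v$. In particular $x_1 = v$ must lie in a $(k-1)$-face; but a vertex lies in no face of dimension below $0$, and more to the point we want it forced into a $(k-1)$-face when it is not — so the correct target is not a vertex but a point in the relative interior of a face of dimension exactly $k$. Concretely, let $F$ be a $k$-face of $P$ and let $p$ be the barycenter of the vertices of $F$, so $p$ lies in the relative interior of $F$. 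Any convex combination of points of $P$ equal to $p$ must be supported on $F$ (since $p$ is in the relative interior of the smallest face containing it, namely $F$), hence each $x_i \in F$, and $\tfrac1n(x_1+\dots+x_n)=p$ forces the affine hull consideration: the $x_i$ barycenter lands in $F$ but we need $x_1$ in a $(k-1)$-face, i.e. in the boundary $\partial F$. The key point is that if $x_1 \in \partial F$ while $x_2,\dots,x_n \in F$, then $p = \tfrac1n x_1 + \tfrac{n-1}{n} q$ where $q = \tfrac{1}{n-1}(x_2+\dots+x_n) \in F$; choosing $p$ to be the barycenter of $F$'s vertices and $F = \Delta^k$, a short computation in barycentric coordinates shows this is impossible because the coordinate of $x_1$ that vanishes (it being on a facet of $F$) cannot be compensated. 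I expect this to reduce to: the barycenter of $\Delta^k$ cannot be written as a convex combination $\tfrac1n x_1 + \tfrac{n-1}{n} q$ with $x_1 \in \partial \Delta^k$ and $q \in \Delta^k$ — which is false as stated, so the real construction must use $d \ge nk$ and a polytope that is a join or product forcing more faces to be "used up," and $p$ chosen so that any representation is rigid.

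Let me restart the sketch with the right idea. For part (\ref{F-k}), take $P = \Delta^k \times \Delta^{d-k}$ or, more simply, observe that the obstruction is dimensional: I would take $P$ to be a $d$-simplex, pick a $k$-face $F = \Delta^k$ of $P$, and let $p$ be the barycenter of $F$. Since $p$ lies in the relative interior of $F$ and $F$ is a face, every point $x_i$ with $\tfrac1n\sum x_i = p$ must lie in $F$. Now the claim is that we need at least $k+1$ of the $x_i$ to "see" all $k+1$ vertices of $F$ in the sense that the support (in barycentric coordinates of $F$) of $\sum x_i$ is all of $F$; since each $x_i$ in a $k_i$-face of $P$ that lies inside $F$ is in fact in a $\min(k_i,k)$-face of $F$, the union of supports has size at most $\sum_i (\min(k_i,k)+1)$. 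With $n-1$ of the faces being $d \ge k$ (so contributing $k+1$ each) and one being $k-1$ (contributing $k$), the total is $(n-1)(k+1) + k$, which is plenty — so this counting alone does not give a contradiction and I need the finer fact that the barycenter, having all coordinates equal to $\tfrac1{k+1}$, forces each $x_i$ to genuinely use a vertex that no other can drop. The cleanest route: for part (\ref{F-k}) use $P = \Delta^{k-1} \ast \{v\}$ type gadgets is overkill; instead take $P$ itself to be $(k-1)$-dimensional is impossible since $d\ge nk$.

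Here is the approach I would actually commit to. For (\ref{F-k}): let $P$ be a $d$-polytope having a $k$-dimensional face $F$ whose relative interior meets no lower-dimensional face, and pick $p \in \mathrm{relint}(F)$ very close to a vertex $w$ of $F$, say $p = (1-\varepsilon)w + \varepsilon b$ where $b$ is the barycenter of $F$ and $\varepsilon$ is tiny. Then $\tfrac1n\sum x_i = p$ with all $x_i \in F$ forces, for $\varepsilon$ small, that most $x_i$ are near $w$; if $x_1$ is constrained to a $(k-1)$-face it must lie on a facet of $F$ missing some vertex $u$ of $F$, and then to recover the positive $b$-coordinate at $u$ the remaining $x_2,\dots,x_n$ must supply it — fine — but if instead we choose $p$ to be the barycenter $b$ of $F$ itself (all barycentric coordinates $\tfrac{1}{k+1}$) and set $n = k+1$ with $d = nk = k(k+1)$, then writing each $x_i$ in barycentric coordinates of $F$ as a probability vector $\xi_i$, we need $\tfrac{1}{k+1}\sum_{i=1}^{k+1}\xi_i = (\tfrac{1}{k+1},\dots,\tfrac{1}{k+1})$, i.e. $\sum \xi_i = (1,1,\dots,1)$; with $x_1$ on a facet of $F$, $\xi_1$ has some zero coordinate, so $\sum_{i\ge 2}\xi_i$ has a coordinate equal to $1$, forcing all of $\xi_2,\dots,\xi_{k+1}$ to be supported on that single vertex, i.e. equal to it — but then the other $k$ coordinates of $\sum_{i\ge 2}\xi_i$ are $0$, contradicting that they must sum to $1$ each (as $k \ge 1$). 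This is the contradiction. For general $n > k+1$ and $d \ge nk$ one takes $F$ of dimension $k$ inside a $d$-polytope and reduces to this by noting the surplus points $x_{k+2},\dots,x_n$ can be absorbed, or more carefully by rescaling the target, and for part (\ref{dim}) the same barycentric-coordinate argument applies: with $n-r+1$ points confined to $k$-faces and $r-1$ points anywhere in a $d$-polytope with $d = nk + r$, choose $P$ to contain a suitable face and $p$ the barycenter of a $(k+1)$-face $G$; any $x_i$ in a $k$-face inside $G$ is on a facet of $G$, so its barycentric coordinate vector (length $k+2$) has a zero, and the $r-1$ free points cannot make up the deficit — a dimension count $(n-r+1)\cdot(k+1) + (r-1)\cdot(k+2) = nk + n + r - 1 \le \dim G \cdot \text{(number of points)}$ comparison shows the $\xi_i$ cannot sum to the all-$\tfrac{1}{k+2}$ vector scaled by $n$. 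The main obstacle I anticipate is getting the book-keeping exactly right when $n$ exceeds the minimal value and when the ambient polytope must genuinely have dimension $d$ rather than just containing the critical face; I expect to handle this by taking $P$ to be a simplex $\Delta^d$, choosing the critical face $F$ (resp. $G$) to be a coordinate face, placing the target at the barycenter of that face, and checking that confinement to a face of dimension one less than $\dim F$ kills one barycentric coordinate, after which the all-equal target provides the contradiction exactly as above, with the extra freely-placed points never enough to resurrect a vanished coordinate because each contributes at most $\dim F$ (resp. $\dim G$) worth of "coordinate mass" in the wrong place.
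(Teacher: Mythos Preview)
Your argument for part~(\ref{F-k}) contains a genuine error. Placing the target $p$ at the barycenter of a $k$-face $F$ of $\Delta^d$ does \emph{not} produce a counterexample. The step ``$\sum_{i\ge 2}\xi_i$ has a coordinate equal to $1$, forcing all of $\xi_2,\dots,\xi_{k+1}$ to be supported on that single vertex'' is simply false: for $k=2$, $n=3$ take $\xi_1=(0,\tfrac12,\tfrac12)$, $\xi_2=(\tfrac12,\tfrac12,0)$, $\xi_3=(\tfrac12,0,\tfrac12)$; these sum to $(1,1,1)$ with $\xi_1$ on a facet of $F$. In fact, for \emph{any} $p$ in the relative interior of a $k$-face $F$ (with $k\ge 1$, $n\ge 2$) one can write $p=\tfrac1n x_1+\tfrac{n-1}{n}q$ with $x_1\in\partial F$ and $q\in F$: intersect the line through $p$ and any boundary point with $F$ and use the endpoint on the far side. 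So no target confined to a $k$-face can refute $\mathcal{P}(d;\,k{-}1,\,d,\dots,d)$. Your construction never uses the hypothesis $d\ge nk$, which is already a warning sign.

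The paper's proof takes $p$ to be the barycenter of the \emph{full} simplex $\Delta^d$. If $x_1$ lies in a $(k{-}1)$-face, say $\conv\{e_1,\dots,e_k\}$, then the linear functional $\phi=e_1^*+\dots+e_k^*$ satisfies $\phi(\tfrac1n x_1)=\tfrac1n$ while $\phi(p)=\tfrac{k}{d+1}\le\tfrac{k}{nk+1}<\tfrac1n$; this is exactly where $d\ge nk$ enters. For part~(\ref{dim}) your sketch inherits the same flaw (barycenter of a $(k{+}1)$-face is again representable with one point on its boundary), and the vague mass-counting at the end does not yield a contradiction. The paper instead takes $P=(\Delta^n)^k\times\Delta^r$ with a carefully chosen \emph{non}-barycentric target in the $\Delta^r$ factor; part~(\ref{F-k}) forces every point to use at least one dimension in each $\Delta^n$ factor, so the $n{-}r{+}1$ points restricted to $k$-faces must project to vertices of $\Delta^r$, and a separate lemma about that special target in $\Delta^r$ rules this out.
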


Before proving Theorem~\ref{thm:inh-skel-rephr} and Proposition~\ref{prop}, we first need an additional lemma. In the following denote by $\Delta^d$ the regular $d$-dimensional simplex $$\{(x_1, \dots, x_{d+1}) \in \R^{d+1} \: | \: x_i \ge 0, \sum_i x_i = 1\}.$$ We denote the standard basis of $\R^d$ by $e_1, \dots, e_d$, and the dual basis by $e_1^*, \dots, e_d^*$.

\begin{lemma}
\label{lem-F-r}
    There do not exist points $x_1, \dots, x_n$ in the regular $d$-simplex~$\Delta^d$, $d < n$, where $x_1, \dots, x_{n-d+1}$ are vertices, and the barycenter $\frac{1}{n} \sum_i x_i$ is equal to 
    \[ p = \left(\tfrac{d-\half}{dn},\cdots,\tfrac{d-\half}{dn},\tfrac{n-d+\half}{n}\right) \in \Delta^d. \]
\end{lemma}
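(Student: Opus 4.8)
The plan is to argue by contradiction, reading the barycenter equation off coordinate by coordinate. Suppose points $x_1,\dots,x_n \in \Delta^d \subseteq \R^{d+1}$ as in the statement exist, and write $m = n-d+1$ for the number of prescribed vertices, so that each of $x_1,\dots,x_m$ is one of $e_1,\dots,e_{d+1}$ while $x_{m+1},\dots,x_n$ are arbitrary points of $\Delta^d$; note $m \ge 2$ since $d<n$, and $d \ge 1$ for $p$ to be defined. Since $p = \bigl(\tfrac{d-\half}{dn},\dots,\tfrac{d-\half}{dn},\tfrac{n-d+\half}{n}\bigr)$, the identity $\sum_{i=1}^n x_i = np$ reads, in coordinate $j$,
\[
\sum_{i=1}^n (x_i)_j = 1 - \tfrac{1}{2d} \quad (j \le d), \qquad \sum_{i=1}^n (x_i)_{d+1} = m - \half .
\]

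The crux is that $p$ has been engineered so that each of its first $d$ coordinates is too small to accommodate a vertex pointing in that direction. Fix $j \le d$ and let $c_j$ be the number of indices $i \le m$ with $x_i = e_j$; this is a nonnegative integer, it is exactly the contribution of the prescribed vertices to the $j$-th coordinate sum, and the remaining points $x_{m+1},\dots,x_n$ contribute a nonnegative amount in that coordinate. Hence $c_j \le 1 - \tfrac{1}{2d} < 1$, which forces $c_j = 0$ for every $j \in \{1,\dots,d\}$. Therefore each of $x_1,\dots,x_m$ must equal the one remaining vertex $e_{d+1}$.

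But then $x_1,\dots,x_m$ alone contribute $m$ to the $(d+1)$-st coordinate sum, while $x_{m+1},\dots,x_n$ contribute a nonnegative amount, so $\sum_{i=1}^n (x_i)_{d+1} \ge m > m - \half$, contradicting the second displayed equation. The only preliminary check is that $p$ genuinely lies in $\Delta^d$: its coordinates are positive because $d \ge 1$ and $d < n$, and they sum to $d \cdot \tfrac{d-\half}{dn} + \tfrac{n-d+\half}{n} = 1$. I do not expect a real obstacle here; the single point worth isolating is that the first $d$ coordinate constraints already pin down all $m$ prescribed vertices as $e_{d+1}$, after which the last coordinate yields the $\half$-deficit contradiction immediately — all the ingenuity sits in the choice of $p$, not in the verification.
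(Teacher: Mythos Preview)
Your proof is correct and is essentially the same argument as the paper's: both exploit that each of the first $d$ coordinates of $np$ is strictly less than $1$ (so no prescribed vertex can be $e_j$ with $j\le d$) while the last coordinate is strictly less than $m=n-d+1$ (so not all $m$ prescribed vertices can be $e_{d+1}$). The only cosmetic difference is the order---the paper first rules out ``all equal $e_{d+1}$'' via the last coordinate and then derives the contradiction from a single $e_j^*$, whereas you first sweep through $j\le d$ to force all prescribed vertices to $e_{d+1}$ and then contradict the last coordinate.
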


\begin{proof}
Suppose there are points $x_1,\dots,x_n \in \Delta^d$ with barycenter~$p$, and that $x_1,\dots,x_{n-d+1}$ are vertices of~$\Delta^d$. We cannot have $x_i = e_{d+1}$ for all $i \in [n-d+1]$, since that would give 
\[ e_{d+1}^*(p) = e_{d+1}^*\left( \tfrac{1}{n} x_1 +\dots+ \tfrac{1}{n} x_{n}  \right) \geq e_{d+1}^*\left( \tfrac{1}{n} x_1 +\dots+ \tfrac{1}{n} x_{n-d+1}  \right) = \tfrac{n-d+1}{n}, \]
but $e_{d+1}^*(p) = \tfrac{n-d+\half}{n} < \tfrac{n-d+1}{n}$. Therefore, at least one of the $x_i$ is a vertex of $\Delta^d$ other than the vertex~$e_{d+1}$. We may assume that $x_1 = e_1$, which gives
\[ e_1^*(p) \geq e_1^*\left(\tfrac{1}{n}x_1\right) = \tfrac{1}{n} ,\]
but $e_1^*(p) = \tfrac{d-\half}{dn} < \tfrac{1}{n}$, which is again a contradiction. Thus, no such points $x_1,\dots,x_n$ exist. 
\end{proof}

\begin{proof}[Proof of Theorem~\ref{thm:inh-skel-rephr}]
For a given $d$-polytope $P$ with $d = nk+r$, $r \in \{0,\dots, n-1\}$, let $Q = P \times \Delta^{s}$ where $s=n-r$, and let
\[ x = \left(\tfrac{s-\half}{sn},\cdots,\tfrac{s-\half}{sn},\tfrac{n-s+\half}{n}\right) \in \Delta^s. \]
Since $Q$ is an $n(k{+}1)$-polytope, by Theorem~\ref{thm:skel} there are points $y_1,\dots,y_n$ in the $(k{+}1)$-faces of $Q$ that have $x$ as their barycenter. Let $x_i \in \mb{R}^{nk+r}$ be the first component of $y_i$ in the product $Q= P \times \Delta$, and let $\tilde y_i$ be the second component of~$y_i$. Then, the $x_i$ are in the $(k{+}1)$-faces of $P$ and sum to zero.

Suppose that at least $r+1$ of the points $x_i$ are not in a $k$-face of~$P$. Then at least $r+1 = n-s+1$ of the points $\tilde y_i$ are vertices of~$\Delta^s$, but that contradicts Lemma~\ref{lem-F-r}, since $x$ is the barycenter of $\{\tilde y_1,\dots,\tilde y_n\}$. Thus, at least $n-r$ of the points $x_i$ are in $k$-faces of~$P$.
\end{proof}

\begin{proof}[Proof of Proposition~\ref{prop}]
\begin{compactenum}[(a)]
\item Let $P = \Delta^{d}$ and let 
\[ p = \left(\tfrac{1}{d+1},\dots,\tfrac{1}{d+1}\right). \]

Suppose there are points $x_1,\dots,x_n \in \Delta^d$ with barycenter $p$, and that one of the points is in a $(k{-}1)$-face of $\Delta^d$. We may assume $x_1 \in \conv\{e_1,\dots,e_k\}$. Let $\phi = e_1^* + \dots +e_k^*$.  Then, we have
\[ \phi(p) = \phi\left(\tfrac{1}{n}x_1+\dots +\tfrac{1}{n}x_n\right) \geq \phi\left(\tfrac{1}{n}x_1\right) = \tfrac{1}{n}, \]
but this is a contradiction, since $\phi(p) = \tfrac{k}{d+1} \leq \tfrac{k}{nk+1} < \tfrac{1}{n}$. Thus, no such points $x_1,\dots,x_n$ exist.

\item Let 
\[ P = \underbrace{\Delta^n \times \dots \times \Delta^n }_{k} \times \Delta^r \]
\[ 
\begin{array}{r@{\ }c@{\ }l}
p
&=& p_1 \times \dots \times p_{k+1} \vspace{3pt} \\
&=& \left(\mathrlap{\phantom{\tfrac{1}{n+1}}}\right.\!\underbrace{\tfrac{1}{n+1},\dots,\,\tfrac{1}{n+1}}_{k(n+1)},\,\underbrace{\tfrac{r-\half}{rn},\cdots,\, \tfrac{r-\half}{rn}}_r,\,\left.\tfrac{n-r+\half}{n}\right) \\
\end{array} \]
where $p_1,\dots,p_{k} \in \Delta^n$ are each the target point from part~(\ref{F-k}) when $k=1,r=0$, and $p_{k+1} \in \Delta^r$ is the target point from Lemma~\ref{lem-F-r}.

Suppose there are points $x_1,\dots,x_n \in P$ with barycenter $p$, and that the points $x_1,\dots,x_{n-r+1}$ are in $k$-faces of~$P$. Let $x_{i,j}$ be the $j$-th component of $x_i$ from the product defining $P$ above, and let $k_{i,j}$ be the dimension of the minimal face containing $x_{i,j}$. That is, for $j \in \{1,\dots,k\}$, $x_{i,j}$ is given by the $j$-th block of $n+1$ consecutive coordinates of~$x_i$, 
\[ x_{i,j} = (e_{(n+1)j -n}^*(x_i),\dots,e_{(n+1)j}^*(x_i)) \in \Delta^n \]
And, $x_{i,k+1} $ is given by the last $r+1$ coordinates of~$x_i$, 
\[ x_{i,k+1} = (e_{(n+1)k +1}^*(x_i),\dots,e_{(n+1)k +r +1}^*(x_i)) \in \Delta^r. \]

Since the barycenter of $\{x_{1,j},\dots,x_{n,j}\}$ for $j \in \{1,\dots,k\}$ is 
\[p_j = \left(\tfrac{1}{n+1},\cdots,\tfrac{1}{n+1}\right), \]
by part~(\ref{F-k}), none of the points $x_{i,j}$ are vertices of $\Delta^n$. Hence, $k_{i,j} \geq 1$ for $j \in \{1,\dots,k\}$, so ${k_{i,1}+\dots+k_{i,k}} \geq k$. Since $x_{i}$ is in a $k$-face for $i \in \{1,\dots,n-r+1\}$, we must have $k_{i,1}+\dots+k_{i,k+1} \leq k$, so $k_{i,k+1} = 0$. That is, $n-r+1$ of the points $x_{i,k+1}$ must be vertices of $\Delta^r$, but that is impossible since the barycenter of $\{x_{1,k+1},\dots,x_{n,k+1}\}$ is
\[p_{k+1} = \left(\tfrac{r-\half}{rn},\cdots,\tfrac{r-\half}{rn},\tfrac{n-r+\half}{n}\right), \]
so by Lemma~\ref{lem-F-r} at most $n-r$ of the points $x_{i,k+1}$ can be vertices of $\Delta^r$. Thus, no such points $x_1,\dots,x_n$ exist. 
\end{compactenum}
\end{proof}

\section{Unbalanced weights}

Given positive integers $n$ and $k$, what are all $n$-tuples $(\lambda_1, \dots, \lambda_n)$ of coefficients $\lambda_i > 0$ normalized to $\lambda_1 + \dots + \lambda_n = 1$ such that for any $nk$-polytope $P \subset \R^{nk}$ with $0 \in P$ there are $x_1, \dots, x_n \in P^{(k)}$ with $\lambda_1x_1 + \dots + \lambda_nx_n = 0$? We denote the set of all such coefficients with $\lambda_1 \ge \lambda_2 \ge \dots \ge \lambda_n$ by~$\Lambda(n,k)$. The set $\Lambda(n,k)$ is nonempty since it contains $(\frac1n, \dots, \frac1n)$ by Theorem~\ref{thm:skel}. By taking $P$ to be closer and closer approximations of the unit ball in~$\R^{2k}$, we see that $\Lambda(2,k) = \{(\frac12, \frac12)\}$. We will show that $\Lambda(n,1)$ may contain more than one element for $n > 2$.

\begin{theorem}
\label{thm:unb-weights}
	Let $s$, $t$, and $k$ be positive integers, and let $n = sk+t(k+1)$. Then the coefficient vector $(\lambda_1, \dots, \lambda_n)$ with $\lambda_1 = \dots = \lambda_{sk} = \frac{1}{(s+t)k}$ and $\lambda_{sk+1} = \dots = \lambda_n = \frac{1}{(s+t)(k+1)}$ is contained in~$\Lambda(n,1)$.
\end{theorem}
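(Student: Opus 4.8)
The plan is to deduce this purely from the two zero-sum results already available: Theorem~\ref{thm:inh-skel}, to split the target into the right number of ``block averages'' living in faces of dimensions $k$ and $k+1$, followed by the barycenter reformulation of Theorem~\ref{thm:skel}, to turn each block average into a genuine barycenter of points in the $1$-skeleton. The guiding observation is that the coefficient pattern in the statement is precisely what one obtains by partitioning $n$ points into $s$ blocks of size $k$ and $t$ blocks of size $k+1$, averaging within each block, and then averaging the $s+t$ resulting points: a point in a $k$-block enters with coefficient $\tfrac{1}{s+t}\cdot\tfrac1k$ and a point in a $(k+1)$-block with coefficient $\tfrac{1}{s+t}\cdot\tfrac1{k+1}$.

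Carrying this out, let $P\subset\R^n$ be any $n$-polytope with $0\in P$, and set $m=s+t$. Then $n=sk+t(k+1)=mk+t$ with $t\in\{0,1,\dots,m-1\}$, the upper bound holding because $s\ge 1$. Applying Theorem~\ref{thm:inh-skel} with its ``$n$'' taken to be $m$, its ``$k$'' taken to be $k$, and its ``$r$'' taken to be $t$, we obtain points $z_1,\dots,z_s$ in $k$-faces of $P$ and points $z_{s+1},\dots,z_m$ in $(k+1)$-faces of $P$ with $z_1+\dots+z_m=0$. For $j\le s$, regard the $k$-face containing $z_j$ as a $k$-polytope and apply the barycenter form of Theorem~\ref{thm:skel} (with ``$n$'' $=k$ and ``$k$'' $=1$) to get points $w^j_1,\dots,w^j_k$ in its $1$-skeleton, hence in $P^{(1)}$ since a face of a face of $P$ is a face of $P$, with $z_j=\tfrac1k\sum_{i=1}^k w^j_i$. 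Likewise, for $j>s$ apply Theorem~\ref{thm:skel} inside the $(k+1)$-face to get $w^j_1,\dots,w^j_{k+1}\in P^{(1)}$ with $z_j=\tfrac1{k+1}\sum_{i=1}^{k+1} w^j_i$. The $sk+t(k+1)=n$ points $w^j_i$, weighted by $\tfrac{1}{(s+t)k}$ when $j\le s$ and by $\tfrac{1}{(s+t)(k+1)}$ when $j>s$, then have weights summing to $\tfrac{s}{s+t}+\tfrac{t}{s+t}=1$ and weighted sum $\tfrac{1}{s+t}\sum_{j=1}^m z_j=0$; since $k\le k+1$ these weights are already nonincreasing, so this exhibits exactly the coefficient vector of the statement as an element of $\Lambda(n,1)$.

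Two points need a word of justification, though neither is a real obstacle. First, Theorem~\ref{thm:inh-skel} as phrased delivers points in faces ``of dimension $k$'' (resp.\ $k+1$); should some $z_j$ land in a face of smaller dimension, we enlarge to a face of exactly that dimension, which is possible because the face lattice of a polytope is graded and here $k+1<n$ (indeed $n\ge 2k+1$). Second, the ambient space in each invocation of Theorem~\ref{thm:skel} is the affine hull of the relevant face, identified with $\R^k$ or $\R^{k+1}$; this is legitimate since that statement is affine-invariant and the target $z_j$ ranges over the whole face. Beyond this the argument is bookkeeping, so the only thing that could go wrong is a miscount of blocks, which the identity $n=(s+t)k+t$ rules out; I therefore expect the substantive difficulty to be conceptual (spotting the block decomposition) rather than technical, after which Theorems~\ref{thm:inh-skel} and~\ref{thm:skel} do all the work.
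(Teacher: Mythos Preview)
Your argument is correct and follows essentially the same route as the paper: apply Theorem~\ref{thm:inh-skel} with $s+t$ in place of $n$ to obtain $s$ points in $k$-faces and $t$ points in $(k+1)$-faces summing to zero, then apply Theorem~\ref{thm:skel} inside each such face to write it as a barycenter of points in the $1$-skeleton. The paper's proof is terser and leaves implicit the technical points you spell out (enlarging to faces of exactly the right dimension, working in the affine hull), but the structure is identical.
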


\begin{proof}
	Let $P$ be an $n$-polytope with $0 \in P$. By Theorem~\ref{thm:inh-skel-rephr} there are points $x_1, \dots, x_s$ in $k$-faces of~$P$ and points $x_{s+1}, \dots, x_{s+t}$ in $(k+1)$-faces of~$P$, such that $\sum x_i = 0$. Each $x_i$ with $i \in \{1, \dots, s\}$ can be written as $x_i = \frac{1}{k}\sum_j y_j^{(i)}$ for $y_1^{(i)}, \dots, y_k^{(i)} \in P^{(1)}$. Similarly, each $x_i$ with $i \in \{s+1, \dots, s+t\}$ can be written as $x_i = \frac{1}{k+1}\sum_j y_j^{(i)}$ for $y_1^{(i)}, \dots, y_{k+1}^{(i)} \in P^{(1)}$. Putting this together, we obtain
	$$0 = \sum_{i=1}^s \frac{1}{k}\sum_j y_j^{(i)} + \sum_{i=s+1}^{s+t} \frac{1}{k+1}\sum_j y_j^{(i)}.$$
	Normalizing these coefficients to sum up to one, we get the result.
\end{proof}

This shows that unbalanced weights may be prescribed for certain parameters. Our last goal is to show that asymptotically, that is, for fixed~$n$ and large~$k$, unbalanced weights may not be prescribed. More precisely,
given $n$ positive real numbers $\lambda_1, \dots, \lambda_n$ with $\lambda_1 + \dots + \lambda_n = 1$ that are not all equal to~$\frac1n$, there is an integer $k$ and an $nk$-polytope $P$ with $0 \in P$ such that $\lambda_1x_1 + \dots + \lambda_nx_n \ne 0$ for all $x_1, \dots, x_n \in P^{(k)}$. We will need the following simple lemma:

\begin{lemma}
\label{lem:coeff-bound}
	Let $k \ge 1$ and $n \ge 2$ be integers. Let $d = nk$ and $x_1, x_2 \in \Delta^d$, where $x_1$ is contained in a $k$-face of~$\Delta^d$. Suppose $\lambda_1x_1+\lambda_2x_2 = (\frac{1}{d+1}, \dots, \frac{1}{d+1})$ for $\lambda_1,\lambda_2 \ge 0$ with $\lambda_1+\lambda_2=1$. Then $\lambda_1 \le \frac{k+1}{d+1}$.
\end{lemma}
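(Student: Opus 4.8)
The plan is to reuse the ``summing a block of dual coordinates'' trick that already appears in Lemma~\ref{lem-F-r} and in the proof of Proposition~\ref{prop}: pick a linear functional that vanishes on the face containing $x_1$, is bounded by $1$ on all of $\Delta^d$, and takes a controlled value on the target point $p=(\tfrac{1}{d+1},\dots,\tfrac{1}{d+1})$.

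Concretely, first I would use the symmetry of $\Delta^d$ under permutations of coordinates to assume that the minimal face containing $x_1$ is $\conv\{e_1,\dots,e_{k+1}\}$; since a $k$-face of the simplex has exactly $k+1$ vertices, this means $e_i^*(x_1)=0$ for all $i>k+1$. Now set
\[
\psi \;=\; e_{k+2}^* + e_{k+3}^* + \dots + e_{d+1}^*,
\]
the sum of the $d-k$ coordinate functionals ``outside'' that face. Then $\psi(x_1)=0$ by the support condition on $x_1$, and $\psi(x_2)\le 1$ because $x_2\in\Delta^d$ has nonnegative coordinates summing to $1$, so any partial sum of them is at most $1$. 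On the target point we compute $\psi(p) = \frac{d-k}{d+1}$.

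Applying $\psi$ to the hypothesis $\lambda_1 x_1 + \lambda_2 x_2 = p$ then gives
\[
\frac{d-k}{d+1} \;=\; \psi(p) \;=\; \lambda_1\psi(x_1) + \lambda_2\psi(x_2) \;=\; \lambda_2\,\psi(x_2) \;\le\; \lambda_2,
\]
so $\lambda_2 \ge \frac{d-k}{d+1}$ and hence $\lambda_1 = 1-\lambda_2 \le \frac{(d+1)-(d-k)}{d+1} = \frac{k+1}{d+1}$, as claimed. I do not expect a genuine obstacle here: the only points requiring a moment's care are the vertex count of a $k$-face (giving the ``$k+1$'') and the elementary bound $\psi(x_2)\le 1$; the fact $d=nk$ and $n\ge2$ are not even needed for the inequality itself, though they are what make the bound meaningful in the intended application.
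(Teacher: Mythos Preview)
Your proof is correct and takes essentially the same approach as the paper: the paper uses the complementary functional $\phi = e_1^* + \dots + e_{k+1}^*$ (so $\phi(x_1)=1$, $\phi(x_2)\ge 0$, $\phi(p)=\tfrac{k+1}{d+1}$, giving $\lambda_1 = \lambda_1\phi(x_1) \le \phi(p)$ directly), while you use $\psi = 1-\phi$ and bound $\lambda_2$ instead; since $\phi+\psi\equiv 1$ on $\Delta^d$, the two computations are literally the same inequality read from opposite ends. One tiny wording quibble: you say the \emph{minimal} face containing $x_1$ is $\conv\{e_1,\dots,e_{k+1}\}$, but all you need (and all the hypothesis gives) is that $x_1$ lies in \emph{some} $k$-face, which after relabeling you may take to be this one.
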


\begin{proof}
	Let $p = (\frac{1}{d+1}, \dots, \frac{1}{d+1})$. We may assume $x_1 \in \conv\{e_1,\dots,e_{k+1}\}$. Let $\phi = e_1^* + \dots +e_{k+1}^*$.  Then, we have
    \[ \tfrac{k+1}{nk+1} = \tfrac{k+1}{d+1} = \phi(p) = \phi\left(\lambda_1x_1+\lambda_2x_2\right) \ge \phi\left(\lambda_1x_1\right) = \lambda_1. \]
\end{proof}

\begin{theorem}
	Let $n>0$ and $k>0$ be integers. If $(\lambda_1, \dots, \lambda_n) \in \Lambda(n,k)$ then $\lambda_i \le \frac{k+1}{nk+1}$ for all~$i$.
\end{theorem}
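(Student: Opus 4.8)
The plan is to test the definition of $\Lambda(n,k)$ against a single, carefully placed simplex and then invoke Lemma~\ref{lem:coeff-bound}. Since the tuples in $\Lambda(n,k)$ are listed in nonincreasing order, $\lambda_1 = \max_i \lambda_i$, so it suffices to prove $\lambda_1 \le \frac{k+1}{nk+1}$; the case $n=1$ is trivial, so I would assume $n \ge 2$. Write $d = nk$ and let $p = (\frac{1}{d+1},\dots,\frac{1}{d+1})$ be the barycenter of the regular simplex $\Delta^d$. I would take $P := \Delta^d - p$, the simplex translated so that its barycenter sits at the origin; after identifying the linear hyperplane $\{x \in \R^{d+1} : \sum_i x_i = 0\}$ with $\R^{d}$, this is a $d$-polytope with $0 \in P$, hence a legitimate instance in the definition of $\Lambda(n,k)$.

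Assume $(\lambda_1,\dots,\lambda_n) \in \Lambda(n,k)$. Applying the definition to $P$, there are points $x_1,\dots,x_n \in P^{(k)}$ with $\lambda_1 x_1 + \dots + \lambda_n x_n = 0$. Set $y_i := x_i + p$; then each $y_i$ lies in a face of $\Delta^d$ of dimension at most $k$, hence (since $d = nk \ge k$) in some $k$-face of $\Delta^d$, and because $\lambda_1 + \dots + \lambda_n = 1$ we have $\lambda_1 y_1 + \dots + \lambda_n y_n = p$.

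To finish, I would collapse the tail: since $n \ge 2$ and all $\lambda_i > 0$ we have $\lambda_1 < 1$, and $x_2' := \frac{1}{1-\lambda_1}\sum_{i=2}^n \lambda_i y_i$ is a convex combination of points of $\Delta^d$, hence $x_2' \in \Delta^d$. Then $\lambda_1 y_1 + (1-\lambda_1) x_2' = p$ with $y_1$ in a $k$-face of $\Delta^d$, so Lemma~\ref{lem:coeff-bound}, applied with the pair $\lambda_1, 1-\lambda_1$ in the roles of $\lambda_1,\lambda_2$, yields $\lambda_1 \le \frac{k+1}{d+1} = \frac{k+1}{nk+1}$, as desired.

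I do not expect a genuine obstacle here: the only points needing care are that the barycenter-centered simplex is $nk$-dimensional and contains the origin, and that a face of dimension at most $k$ of $\Delta^{nk}$ is contained in a $k$-face, so that Lemma~\ref{lem:coeff-bound} applies. One can equally well bypass Lemma~\ref{lem:coeff-bound} and argue directly: picking $\phi = e_1^* + \dots + e_{k+1}^*$ for $k+1$ vertices spanning a $k$-face through $y_1$, nonnegativity of the barycentric coordinates gives $\frac{k+1}{nk+1} = \phi(p) = \sum_i \lambda_i \phi(y_i) \ge \lambda_1 \phi(y_1) = \lambda_1$.
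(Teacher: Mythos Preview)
Your argument is correct and follows essentially the same route as the paper: test the coefficient vector against the barycenter-centered $nk$-simplex, collapse $x_2,\dots,x_n$ into a single point of $\Delta^d$ by a convex combination, and apply Lemma~\ref{lem:coeff-bound} to bound $\lambda_1$. If anything, you are slightly more explicit than the paper about translating back to the standard simplex before invoking the lemma.
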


\begin{proof}
	Let $d = nk$. Translate the standard simplex $\Delta^d$ such that its barycenter is at the origin. Now suppose 
	\[ 0 = \lambda_1x_1 + \dots + \lambda_nx_n, \]
	where the $x_i$ are in $k$-faces of~$\Delta^d$, and the coefficients $\lambda_i$ are nonnegative and satisfy $\lambda_1+\dots+\lambda_n = 1$. We also assume $\lambda_1 \ge \lambda_2 \ge \dots \ge \lambda_n$.
	Then $\lambda_2 + \dots + \lambda_n > 0$ and thus 
	\[ 0 = \lambda _1x_1 + (\lambda_2 + \dots + \lambda_n)(\tfrac{\lambda_2}{\lambda_2 + \dots + \lambda_n}x_2 
	+ \dots + \tfrac{\lambda_n}{\lambda_2 + \dots + \lambda_n}x_n). \]
	Since $\tfrac{\lambda_2}{\lambda_2 + \dots + \lambda_n}x_2 + \dots + \tfrac{\lambda_n}{\lambda_2 + \dots + \lambda_n}x_n$ is a convex combination of points in~$\Delta^d$, it is itself a point in~$\Delta^d$. Then by Lemma~\ref{lem:coeff-bound}, we have that $\lambda_1 \le \frac{k+1}{d+1} = \frac{k+1}{nk+1}$.
\end{proof}

It is a simple consequence that $\bigcap_k \Lambda(n,k) = \{(\tfrac1n, \dots, \tfrac1n)\}$, or in different words:

\begin{corollary}
\label{cor:balanced}
	Let $\lambda=(\lambda_1, \dots, \lambda_n)$ be some unbalanced coefficient vector, that is, $\lambda_i \ge 0$ for all~$i$, $\lambda_1 + \dots + \lambda_n = 1$, and $\lambda \ne (\tfrac1n, \dots, \tfrac1n)$. Then for $k$ sufficiently large $\lambda \notin \Lambda(n,k)$.
\end{corollary}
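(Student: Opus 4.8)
The plan is to read this off directly from the preceding theorem, which says that if a sorted coefficient vector lies in $\Lambda(n,k)$ then each of its entries is at most $\tfrac{k+1}{nk+1}$. So the whole argument is: an unbalanced vector has an entry strictly above $\tfrac1n$, and $\tfrac{k+1}{nk+1}$ drops below that threshold once $k$ is large.

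First I would normalize. Permuting the coordinates of $\lambda$ changes neither the hypotheses nor the solvability of $\lambda_1x_1 + \dots + \lambda_nx_n = 0$ over $P^{(k)}$, so we may assume $\lambda_1 \ge \lambda_2 \ge \dots \ge \lambda_n$. Since the $\lambda_i$ are nonnegative, sum to $1$, and are not all equal to $\tfrac1n$, the maximum satisfies $\lambda_1 > \tfrac1n$: if instead $\lambda_i \le \tfrac1n$ for every $i$, then $\sum_i \lambda_i = 1$ would force $\lambda_i = \tfrac1n$ for all $i$, contrary to $\lambda \ne (\tfrac1n, \dots, \tfrac1n)$.

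Next I would invoke the elementary identity $\tfrac{k+1}{nk+1} = \tfrac1n + \tfrac{n-1}{n(nk+1)}$, which exhibits $\tfrac{k+1}{nk+1}$ as a sequence that decreases monotonically to $\tfrac1n$ as $k \to \infty$. Hence there is an integer $k_0$ (one may even take $k_0 = \big\lfloor \tfrac1n\big(\tfrac{n-1}{n\lambda_1-1}-1\big)\big\rfloor + 1$, though this is not needed) such that $\tfrac{k+1}{nk+1} < \lambda_1$ for all $k \ge k_0$. For every such $k$, the contrapositive of the preceding theorem --- if some entry of $\lambda$ exceeds $\tfrac{k+1}{nk+1}$ then $\lambda \notin \Lambda(n,k)$ --- yields $\lambda \notin \Lambda(n,k)$, which is the assertion. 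Combined with the observation that $(\tfrac1n,\dots,\tfrac1n) \in \Lambda(n,k)$ for every $k$ by Theorem~\ref{thm:skel}, this also gives the displayed equality $\bigcap_k \Lambda(n,k) = \{(\tfrac1n,\dots,\tfrac1n)\}$.

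There is essentially no obstacle here: the corollary is a formal consequence of the preceding theorem, and the only point requiring verification is the trivial monotone convergence $\tfrac{k+1}{nk+1} \searrow \tfrac1n$. The one thing worth stating carefully is the permutation-invariance in the first step, so that ``unbalanced vector'' in the corollary and ``sorted vector in $\Lambda(n,k)$'' in the theorem are matched up correctly.
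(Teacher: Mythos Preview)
Your argument is correct and matches the paper's approach exactly: the paper simply declares the corollary ``a simple consequence'' of the preceding theorem without writing out a proof, and your derivation is precisely the intended one-line deduction from the bound $\lambda_i \le \tfrac{k+1}{nk+1}$ together with $\tfrac{k+1}{nk+1} \searrow \tfrac1n$.
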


\providecommand{\MRhref}[2]{%
  \href{http://www.ams.org/mathscinet-getitem?mr=#1}{#2}
}
\providecommand{\href}[2]{#2}


\begin{thebibliography}{1}

\bibitem{barba}
Luis Barba, Jean Lou~De Carufel, Otfried Cheong, Michael~Gene Dobbins, Rudolf
  Fleischer, Akitoshi Kawamura, Matias Korman, Yoshio Okamoto, János Pach,
  Yuan Tang, Takeshi Tokuyama, Sander Verdonschot, and Tianhao Wang,
  \emph{Weight balancing on boundaries and skeletons}, Proceedings of the
  thirtieth annual symposium on Computational geometry, ACM, 2014, p.~436.

\bibitem{blagojevic}
Pavle V.~M. Blagojevi{\'c}, Florian Frick, and G{\"u}nter~M. Ziegler,
  \emph{{Barycenters of polytope skeleta and counterexamples to the topological
  Tverberg conjecture, via constraints}}, J. Europ. Math. Soc., to appear
  (2018).

\bibitem{dobbins}
Michael~Gene Dobbins, \emph{{A point in a $nd$-polytope is the barycenter of
  $n$ points in its $d$-faces}}, Invent. Math. \textbf{199} (2015), no.~1,
  287--292.

\bibitem{erdos}
Paul Erd{\H o}s, Abraham Ginzburg, and Abraham Ziv, \emph{Theorem in the
  additive number theory}, Bull. Res. Council Israel F \textbf{10} (1961),
  41--43.

\bibitem{reiher}
Christian Reiher, \emph{{On Kemnitz' conjecture concerning lattice-points in
  the plane}}, Ramanujan J. \textbf{13} (2007), no.~1-3, 333--337.

\end{thebibliography}
\end{document}